\titleformat{\subsection}[runin]
{\bfseries} {\thesubsection{.}}{0.15cm}{}[.]
\titleformat{\subsubsection}[runin]
{\em}{\thesubsubsection{.}}{0.15cm}{}[.]
\newtheorem{theorem}{Theorem}[section]
\newtheorem{lemma}[theorem]{Lemma}
\theoremstyle{definition}
\newtheorem{remark}[theorem]{Remark}
\newtheorem{problem}[theorem]{Problem}
\numberwithin{equation}{section}
\numberwithin{figure}{section}
\newcommand\Lcal{\mathcal{L}}
\newcommand\Cscr{\mathscr{C}}
\newcommand\B{\mathbb{B}}
\newcommand\C{\mathbb{C}}
\newcommand\N{\mathbb{N}}
\newcommand\R{\mathbb{R}}
\newcommand\igot{\mathfrak{i}}
\renewcommand\igot{\mathfrak{i}}
\renewcommand\imath{\igot}
\newcommand\hra{\hookrightarrow}
\newcommand\di{\partial}
\begin{document}

\fancyhead[LO]{Embedding bordered Riemann surfaces in strongly pseudoconvex domains}
\fancyhead[RE]{F.\ Forstneri\v c} 
\fancyhead[RO,LE]{\thepage}

\thispagestyle{empty}

\begin{center}
{\bf\LARGE Embedding bordered Riemann surfaces in \\ strongly pseudoconvex domains}

\vspace*{0.5cm}

{\large\bf  Franc Forstneri{\v c}} 
\end{center}


\vspace*{0.5cm}

\begin{quote}
{\small
\noindent {\bf Abstract}\hspace*{0.1cm}
We show that every bordered Riemann surface, $M$, with smooth boundary $bM$ 
admits a proper holomorphic map $M\to \Omega$ into any bounded strongly pseudoconvex domain $\Omega$ in $\C^n$,
$n>1$, extending to a smooth map $f:\overline M\to\overline \Omega$ which can be chosen an 
immersion if $n\ge 3$ and an embedding if $n\ge 4$. 
Furthermore, $f$ can be chosen to approximate a given holomorphic map $\overline M\to \Omega$ on compacts in 
$M$ and interpolate it at finitely many given points in $M$.

\vspace*{0.2cm}

\noindent{\bf Keywords}\hspace*{0.1cm} Bordered Riemann surface, strongly pseudoconvex domain,
proper holomorphic map, holomorphic embedding.
}

\vspace*{0.1cm}
\noindent{\bf MSC (2010):}\hspace*{0.1cm} 32E30, 32H02

%
%
%
%
%

\vspace*{0.1cm}
\noindent {\bf Date: 4 January 2023} 

\end{quote}

\vspace{5mm}
\centerline{\em In memoriam Mihnea Col{\c t}oiu}
\vspace{5mm}



\section{The main result} \label{sec:intro}

A bordered Riemann surface is an open relatively compact domain, $M$, in another Riemann surface whose nonempty boundary $bM$ is the union of finitely many Jordan curves. 
According to Stout \cite[Theorem 8.1]{Stout1965TAMS}, such a surface is conformally
equivalent to a domain of the form $R\setminus \bigcup_{i=1}^l \Delta_i$ where $R$ is a 
compact Riemann surface and $\Delta_1,\ldots,\Delta_l$ are pairwise disjoint closed discs 
with real analytic boundaries in $R$. 

A perennial problem in complex geometry is to understand the space of 
proper complex curves in a given complex manifold. 
We contribute to this topic with the following result.

%
%
\begin{theorem}\label{th:main1}
Let $\Omega$ be a bounded strongly pseudoconvex domain with boundary of class 
$\Cscr^{k+1}$ for some integer $k\ge 3$ in a complex Euclidean space $\C^n$ for $n>1$, 
and let $M$ be a bordered Riemann surface with real analytic boundary. 
Given a continuous map $f_0:\overline M\to \Omega$ which is holomorphic in $M$, 
a compact set $K$ in $M$, and a number $\epsilon >0$ there exists a proper holomorphic 
map $f:M\to \Omega$, extending to a map $f:\overline M\to\overline \Omega$ of
H\"older class $\Cscr^{k,\alpha}(\overline M)$ for any $\alpha\in (0,1)$
(and holomorphic on a neighbourhood of $\overline M$ if $b\Omega$ is real analytic)
such that $\sup_K |f-f_0|<\epsilon$ and $f$ agrees with $f_0$ to a given order at 
finitely many given points in $M$. If $n\ge 3$ then $f$ can be chosen an immersion on 
$\overline M$, and if $n\ge 4$ then $f$ can be chosen an embedding on $\overline M$ 
provided that the interpolation conditions allow it.
\end{theorem}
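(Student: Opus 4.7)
\emph{Proof plan.}

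The strategy is to produce $f$ as the $\Cscr^{k,\alpha}(\overline M)$-limit of a sequence of holomorphic maps $f_j\colon\overline M\to\Omega$, smooth up to $bM$, whose boundary images $f_j(bM)$ are driven toward $b\Omega$ at each stage while the interior is only slightly perturbed. I would first use Mergelyan-type approximation on bordered Riemann surfaces with real analytic boundary to replace $f_0$ by a holomorphic map $f_1$ defined on a neighborhood of $\overline M$, mapping into $\Omega$, interpolating $f_0$ to the prescribed order at $p_1,\dots,p_N$, and $\epsilon/2$-close to $f_0$ on $K$. The rest of the proof is an inductive construction yielding $f_2,f_3,\dots$ with
\[
\dist\bigl(f_{j+1}(bM),\,b\Omega\bigr)\le (1-c)\,\dist\bigl(f_j(bM),\,b\Omega\bigr),
\qquad
\|f_{j+1}-f_j\|_{\Cscr^{k,\alpha}(\overline M)}<2^{-j}
\]
for a fixed constant $c\in(0,1)$, each $f_{j+1}$ agreeing with $f_j$ to the prescribed order at the $p_i$ and $\epsilon\,2^{-j}$-close to $f_j$ on $K$. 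Geometric summation yields a holomorphic $f\in\Cscr^{k,\alpha}(\overline M)$ with $f(bM)\subset b\Omega$, hence proper into $\Omega$; if $b\Omega$ is real analytic, the induction can be performed inside a slightly enlarged strongly pseudoconvex domain $\Omega'\supset\overline\Omega$, giving $f$ holomorphic on a neighborhood of $\overline M$.

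The core is the local pushing-out step. Given $f_j$, the plan is to cover $f_j(bM)$ by finitely many coordinate charts $U_\nu\subset\C^n$ in which $b\Omega$ appears as a strictly convex hypersurface, split $bM$ into corresponding short arcs $I_\nu$ with $f_j(I_\nu)\subset U_\nu$, and construct in each chart a holomorphic correction essentially supported in a thin collar of $I_\nu$ in $M$ whose effect is to translate the image of $I_\nu$ a definite fraction of the current gap in the outward direction of a local holomorphic supporting function for $b\Omega$ (existing by strong pseudoconvexity and of class $\Cscr^k$ by the $\Cscr^{k+1}$-hypothesis). These local corrections are produced by solving $\dibar$-problems on annular collars of $bM$ with controlled $\Cscr^{k,\alpha}$-estimates via Cauchy or Henkin-type kernels, then glued into a single global correction (by staggering/alternating the charts or by a Cartan-type splitting lemma) so that holomorphicity is preserved and the local gains assemble into the displayed distance estimate. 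Interpolation at the $p_i$ is preserved by arranging each correction to vanish to sufficient order there, which is trivial since the interpolation points are interior.

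Immersion for $n\ge 3$ and embedding for $n\ge 4$ are secured by inserting a general-position perturbation at each inductive stage. The critical locus of a holomorphic map from a one-dimensional complex manifold into $\C^n$ has expected complex codimension $n$, hence generically empty for $n\ge 2$, and the double-point locus in $(M\times M)\setminus\Delta_M$ has expected complex codimension $n$, hence generically empty for $n\ge 3$; the extra dimension in the hypotheses absorbs the pushing-out, boundary-regularity, and interpolation constraints under which the general-position perturbation must be made. The main obstacle, where essentially all of the technical work lies, is the quantified local pushing-out step: producing a global holomorphic correction that displaces $f_j(bM)$ outward by a fixed fraction $c$ of the current gap while its $\Cscr^{k,\alpha}(\overline M)$-norm is simultaneously bounded by $2^{-j}$. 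Strong pseudoconvexity supplies the local convex charts and supporting functions that make the geometric gain possible; sharp $\dibar$-estimates in a collar of $bM$ supply the analytic norm control; balancing the two with a ratio $1-c$ independent of $j$ is what drives the iteration to a proper map with Hölder-regular extension to $\overline M$.
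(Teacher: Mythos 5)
There is a genuine gap, and it sits exactly at the point you yourself flag as ``where essentially all of the technical work lies.'' Your scheme asks each pushing-out step to displace $f_j(bM)$ outward by a fixed fraction $c$ of the current gap while keeping $\|f_{j+1}-f_j\|_{\Cscr^{k,\alpha}(\overline M)}<2^{-j}$. The sup-norm part of this is consistent (the displacements $c\,d_j$ form a geometric series), but the $\Cscr^{k,\alpha}$ part is not something the known $\dibar$/peak-function machinery delivers: the local corrections that push a boundary arc toward $b\Omega$ in a strongly pseudoconvex domain are built from highly oscillatory data whose tangential derivatives along $bM$ are large compared with their sup norms, and no bound of the form ``fixed fractional geometric gain with summable $\Cscr^{1}$ (let alone $\Cscr^{k,\alpha}$) cost'' is available. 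This is precisely why the extant iterative methods (Dor; Drinovec Drnov\v sek--Forstneri\v c) produce proper maps extending only \emph{continuously} to $\overline M$, a limitation the paper states explicitly in the introduction. Asserting the estimate does not close the gap; establishing it would itself be the theorem.

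The paper avoids the iteration entirely. It uses \cite{DrinovecForstneric2007DMJ} once, to get a holomorphic $f_1$ with $f_1(bM)$ in a thin collar $U$ of $b\Omega$; then it attaches to each boundary point $f_1(x)$ an embedded holomorphic disc $g_{f_1(x),v(x)}$ meeting $b\Omega$ transversely (Lemma \ref{lem:discs}), approximates the resulting family by a map $H(x,\zeta)=f_1(x)+\sum_{j=1}^N H_j(x)\zeta^j$ with meromorphic coefficients (Mergelyan plus Royden's theorem), and invokes \v Cerne's solution of the nonlinear Riemann--Hilbert problem \cite{Cerne2004AJM} to produce a single holomorphic $\zeta:\overline M\to\Delta$ of class $\Cscr^{k,\alpha}$ with $\zeta(x)\in\gamma_x$ for $x\in bM$; the map $f(x)=H(x,\zeta(x))$ then lands on $b\Omega$ in one step, and the H\"older regularity is inherited from \v Cerne's theorem rather than proved by summing corrections. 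Note also that the thresholds $n\ge3$ for immersions and $n\ge4$ for embeddings come from putting the \emph{three}-real-dimensional map $h:bM\times\Delta\to\C^n$ in general position, not from a generic-position count on $M$ alone; your remark that ``the extra dimension absorbs the constraints'' does not identify this mechanism. If you want to salvage your outline, the honest fix is to replace the inductive pushing-out by a one-shot Riemann--Hilbert step of this kind, or else to prove the uniform $\Cscr^{k,\alpha}$ bound for the corrections, which is not currently known.
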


Recall that a bounded domain $\Omega$ in $\C^n$ is {\em strongly pseudoconvex} if it is
of the form $\Omega=\{\rho<0\}$, where $\rho$ is a strongly plurisubharmonic function 
on a neighbourhood $U\subset\C^n$ of $\overline \Omega$, i.e., $\rho$ has 
positive definite Levi form: 
\begin{equation}\label{eq:Leviform}
	\Lcal_\rho(z,w) =  \sum_{i,j=1}^n \frac{\partial^2\rho (z)}{\di z_i \di \bar z_j} w_i\overline w_j >0
	\quad\text{for all}\ z\in U\ \text{and}\ w\in \C^n\setminus \{0\},
\end{equation}
and it satisfies the condition $d\rho\ne 0$ on the boundary $b\Omega=\{\rho=0\}$. 

The main new point is that Theorem \ref{th:main1} holds for bordered Riemann surfaces 
of any given conformal type. If one only wishes to control the topological type of $M$, then the conclusion follows 
easily from the known results, and in this case we can find proper holomorphic immersions if $n=2$ and proper holomorphic embeddings if $n\ge 3$.

As pointed in Remark \ref{rem:weaker}, Theorem \ref{th:main1} holds under a weaker geometric assumption on $\Omega$.
It likely holds for any relatively compact, strongly pseudoconvex domain $\Omega$ in 
a Stein manifold $X$ of appropriate dimension; we leave this generalization to an interested reader. 

The Hopf--Ole\u{\i}nik lemma \cite{Hopf1952PAMS,Oleinik1952MS}, applied to the negative 
subharmonic function $\rho\circ f$ on $M$ which vanishes on $bM$, shows that a map $f$ as in 
the theorem is an immersion along the boundary $bM$ which intersects $b\Omega$ transversely. 
Conversely, if $C$ is a smooth curve in $b\Omega$ lying in the boundary of a 
complex analytic curve $A\subset \Omega$ then the pair $(A,C)$ is a local smooth manifold 
with boundary at every point $p\in C$; see \cite{Chirka1982MS} and 
\cite[Theorem 1.1]{Forstneric1988PM}. 

%
%
\begin{remark}\label{rem:generalposition}
A generic holomorphic map $M\to\C^n$ from an open Riemann surface 
is an immersion if $n\ge 2$, and is an injective immersion if $n\ge 3$. 
Hence, the conclusion of the theorem is not be optimal in this respect. 
This loss of the optimal immersion and embedding dimension by one may 
be due to the method of proof. On the other hand, it is not clear whether the 
general position argument gives the optimal conclusion within the 
space of holomorphic maps $M\to\Omega\subset\C^n$ which extend smoothly to 
$\overline M$ and map $bM$ to $b\Omega$. We do not investigate this question here.
\end{remark}

There are many proper holomorphic discs in the ball $\B^n$ of $\C^n$ 
given by rational and also by polynomial maps; see 
\cite{DAngelo2021,DAngeloHuoXiao2017}. We ask the following question.

\begin{problem}
Let $M$ be a bordered Riemann surface with real analytic or real algebraic boundary 
in a compact Riemann surface $R$. Is there a proper holomorphic 
embedding $M\hra\B^n$ for some $n>1$ given by a
meromorphic map of $R$ to $\C^n$ without poles in $\overline M$?
\end{problem}

%
%
Let us place our theorem in the context of known results.

A partial result for the disc $\Delta=\{\zeta\in \C:|\zeta|<1\}$, with interpolation at one point, 
was obtained by Forstneri\v c and Globevnik \cite{ForstnericGlobevnik1992CMH} in 1992.
The proof in this case is much simpler since the disc carries a unique conformal structure. 
The main contribution of \cite{ForstnericGlobevnik1992CMH} 
was the technique of pushing the boundary of an analytic disc in 
a strongly pseudoconvex domain $\Omega$ into the boundary $b\Omega$
by using the Riemann--Hilbert boundary value problem.  

Without asking for approximation, interpolation or general position properties of the map 
$f$ in Theorem \ref{th:main1}, the result follows from the disc case. Indeed, by 
Ahlfors \cite{Ahlfors1950} every bordered Riemann surface, $M$,  
admits many inner functions, i.e., nonconstant continuous functions on $\overline M$ 
which are holomorphic on $M$ and have modulus one on the boundary $bM$. 
Any such function gives a proper holomorphic map $M\to\Delta$. 
The smallest degree of such a map is $2g+m$ where $g$ is the genus of $M$ 
and $m$ is the number of its boundary components. If the boundary $bM$ is real analytic 
then, by the reflection principle, every inner function on $M$ extends to a 
holomorphic function on a neighbourhood of $\overline M$ in the ambient 
Riemann surface. Composing an inner function $\overline M\to \overline \Delta$ 
with a proper holomorphic map $\Delta \to \Omega$ extending smoothly to 
$\overline \Delta$ gives the result.

For maps to the polydisc there is a result of Stout from 1966 
\cite[IV.1 Theorem]{Stout1966MZ} saying the following. 
Let $T=b\Delta=\{\zeta\in \C:|\zeta|=1\}$. 
On every bordered Riemann surface, $M$, there exist three inner functions $f_1,f_2,f_3$ 
such that the map $f=(f_1,f_2,f_3):\overline M\to \overline\Delta^3$ into the closed 
$3$-polydisc separates the points in $\overline M$, it maps the boundary $bM$ to the torus 
$T^3$ (the distinguished boundary of the polydisc $\Delta^3$), and $f:M\hra \Delta^3$ is a 
holomorphic embedding. Including $\Delta^3$ in the ball centred at $0$ 
of radius $\sqrt 3$, the torus $T^3$ sits in the sphere, so this also 
gives proper holomorphic embeddings of $M$ into the 3-ball. However, approximation 
and interpolation of maps as in Theorem \ref{th:main1} are impossible by this method.

In 2004 it was shown by \v Cerne \cite[Corollary 1.4]{Cerne2004AJM} that 
every finitely bordered planar domain with smooth boundary embeds properly 
holomorphically and smoothly up to the boundary in any smoothly bounded convex 
domain in $\C^n$ for $n\ge 2$. The technique developed in his paper 
(see also the sequel \cite{CerneFlores2007TAMS} by \v Cerne and Flores) is the basis
for our construction in the present paper. It uses solutions to certain Riemann--Hilbert
boundary value problems on $M$, similarly to what was done by the author in 
\cite{Forstneric1988IUMJ} (1988) and used in \cite{ForstnericGlobevnik1992CMH} in 
the special case when $M$ is the disc $\Delta$. 

More recently, Coupet, Sukhov, and Tumanov \cite{CoupetSukhovTumanov2009} 
constructed proper $J$-holomorphic discs in almost Stein manifolds $(\Omega,J)$ 
of real dimension $4$. Their approach follows the same geometric scheme of reducing 
to a Riemann--Hilbert problem, but the details are considerably more difficult in the 
case of a nonintegrable almost complex structure $J$.

For general bordered Riemann surfaces $M$ and strongly pseudoconvex domains 
$\Omega$ as in Theorem \ref{th:main1}, the extant methods on the one hand provide 
proper holomorphic maps $M\to \Omega$ extending to continuous maps 
$\overline M\to\overline \Omega$ 
(see \cite[Theorem 8.3.13]{AlarconForstnericLopez2021}, which also gives an 
asymptotic estimate of the distance between the initial holomorphic map 
$f_0:\overline M\to\Omega$ and the final map $f:\overline M\to \overline \Omega$ 
with $f(bM)\subset b\Omega$ in terms of the distance from $f_0(bM)\subset \Omega$ 
to $b\Omega$), and on the other hand give maps as in Theorem \ref{th:main1} but 
with a slightly changed conformal structure on the underlying surface $M$. 
More precisely, if $M\subset M'$ are bordered Riemann surfaces 
such that $M$ is relatively compact in $M'$ and smoothly diffeotopic to $M'$,
there are a bordered Riemann surface $M_0$ diffeomorphic to $M$, 
with $M\subset M_0\subset M'$, and a proper holomorphic map $M_0\to \Omega$ 
as in Theorem \ref{th:main1} extending to a smooth map 
$\overline M_0\to\overline \Omega$. This can be done by using the main result of 
\cite{DrinovecForstneric2007DMJ} as will become clear in the proof of 
Theorem \ref{th:main1}, given in Section \ref{sec:proof}.

For domains $\Omega\subset\C^n$ of dimension $n>2$, Theorem \ref{th:main1} 
holds under weaker geometric (pseudoconvexity) assumptions on $b\Omega$; 
see Remark \ref{rem:weaker}. An indication that this is possible is seen from the 
paper \cite{DrinovecForstneric2007DMJ} by Drinovec Drnov\v sek and Forstneri\v c. 
In this paper, the authors constructed proper holomorphic immersions of any bordered 
Riemann surface into an arbitrary complex manifold $\Omega$ of dimension $n\ge 2$ 
(embeddings if $n\ge 3$) with a smooth exhaustion function $\rho:\Omega\to\R_+$ 
whose Levi form has at least two positive eigenvalues at every point. 
By a different technique, using holomorphic peak functions, their construction was 
extended in \cite{DrinovecForstneric2010AJM} to proper holomorphic 
maps and embeddings of higher dimensional strongly pseudoconvex domains to 
$q$-convex manifolds for suitable values of $q$. (These are complex manifolds satisfying 
a condition on the minimum number of positive eigenvalues of the Levi form of an 
exhaustion function; see Andreotti and Grauert \cite{AndreottiGrauert1962} and the 
survey by Col{\c t}oiu \cite{Coltoiu1995}.) Earlier result  for maps to $q$-convex domains 
in $\C^n$ are due to Dor \cite{Dor1995}.
The main difference from Theorem \ref{th:main1} is that, in the results mentioned above, 
there is no boundary condition to be taken care of. For recent surveys, 
see \cite[Chapter 9]{Forstneric2017E} and \cite{Forstneric2018Korea}.

Finally, we mention the most recent work by Drinovec Drnov\v sek
and Forstneri\v c \cite{DrinovecForstneric2023} who combined the peak function
technique with Oka theory to construct proper holomorphic embeddings
$X\hra\C^n$ from an arbitrary Stein manifold $X$ with $2\dim X<n$ such that the
image lies in a given concave domain $\Omega\subset \C^n$ which is only slightly bigger 
than a halfspace, and we can approximate a given holomorphic embedding 
$K\hra \Omega$ on a compact holomorphically convex subset of $X$.

Theorem \ref{th:main1} has no analogue if $M$ is a strongly 
pseudoconvex domain of higher dimension due to rigidity of CR maps. 
Indeed, by \cite[Theorem 2.2]{Forstneric1986TAMS} the set of germs 
of smooth strongly pseudoconvex real hypersurfaces in $\C^n$ 
which admit a smooth (or even just a formal) Cauchy--Riemann (CR) embedding 
into a sphere of any dimension (the boundary of the ball in some $\C^N$) 
is of the first category in a suitable Fr\'echet space topology. 
The same holds for all strongly pseudoconvex 
real algebraic hypersurfaces as targets; see \cite{Forstneric2004MM}. 
(Recall that a CR map is one that satisfies the tangential Cauchy--Riemann equations.
In the case of a strongly pseudoconvex source hypersurface, such a map extends locally 
holomorphically to the pseudoconvex side.) The question which (strongly) pseudoconvex 
hypersurfaces admit local CR embeddings into various model hypersurfaces has 
attracted considerable attention; see the surveys by Baouendi, Ebenfelt and Rothschild 
\cite{BaouendiEbenfeltRothschild1999}, Pinchuk, Shafikov, and Sukhov 
\cite{PinchukShafikovSukhov2017}, and (for maps to balls) by D'Angelo
\cite{DAngelo2021}. On the other hand, there are proper holomorphic maps and embeddings
of strongly pseudoconvex domains to balls and other model domains which extend 
continuously to the closure of the domain; see the papers by L\o w \cite{Low1985}, 
Globevnik \cite{Globevnik1987}, Hakim \cite{Hakim1990}, and Dor \cite{Dor1990}, among others. 

There are also results in the literature in which one deforms either the domain or the codomain.
A famous example of the latter is the theorem, due to 
Forn{\ae}ss \cite{Fornaess1976AJM} and Chirka and Henkin \cite{HenkinChirka1975}, 
which says that every relatively compact strongly pseudoconvex domain in a Stein 
manifold embeds properly holomorphically and smoothly up to the boundary in some 
strongly convex domain in $\C^N$ for a sufficiently big $N$. 
Concerning the former, for every pair of Stein manifolds $(X,J_X)$ and 
$(Y,J_Y)$ with $\dim Y>2\dim X$ there is a Stein structure $J$ on $X$, homotopic 
to the original structure $J_X$, such that $(X,J)$ admits a proper holomorphic embedding 
into $(Y,J_Y)$ (see \cite[Theorem 5.1]{Forstneric2018Korea}). This is based on the 
work of Eliashberg \cite{Eliashberg1990} and Forstneri\v c and Slapar
\cite{ForstnericSlapar2007MZ,ForstnericSlapar2007MRL}; 
see also the monograph \cite{CieliebakEliashberg2012} by Cieliebak and Eliashberg.

%
%
%

%
%
\section{Proof of Theorem \ref{th:main1}} \label{sec:proof}

Recall that $\Delta=\{\zeta\in \C:|\zeta|<1\}$ denotes the open unit disc. 

Let $\Omega$ be a bounded domain in $\C^n$ for $n>1$ with a smooth strongly plurisubharmonic
defining function $\rho$ on a neighbourhood of $\overline \Omega$. 
Let $z=(z_1,z_2,\ldots,z_n)$ denote the coordinates on $\C^n$. We shall need the following lemma.

%
%
\begin{lemma}\label{lem:discs}
There is an open neighbourhood $U$ of $b\Omega$ and a smooth family of embedded holomorphic 
discs $g_{z,v}: \Delta\to\C^n$ for $z\in U\cap \Omega$ and $v=(v_1,\ldots,v_n)$ a unit vector in $\C^n$ satisfying
\[
	\di\rho(z) v := \sum_{i=1}^n \frac{\di \rho}{\di z_i}(z) \, v_i =0
\]
such that $g_{z,v}(0)=z$, $g'_{z,v}(0)=c v$ for some $c>0$ independent of $(z,v)$, 
and $g_{z,v}(\Delta)$ intersects $b\Omega$ transversely in a closed smooth Jordan curve. 
\end{lemma}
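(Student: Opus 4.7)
The plan is to take $g_{z,v}$ as a holomorphic quadratic perturbation of the affine disc $\zeta\mapsto z+c\zeta v$, with a correction chosen to cancel the holomorphic-Hessian term in the Taylor expansion of $\rho$ along the disc. On a neighbourhood $V$ of $b\Omega$ on which $\partial\rho\ne 0$, define
\[
Q_\rho(z)(v,v) := \sum_{i,j=1}^n \frac{\partial^2\rho(z)}{\partial z_i\,\partial z_j}\,v_i v_j, \qquad
\nu(z) := \frac{\overline{\partial\rho(z)}^{\,T}}{|\partial\rho(z)|^2},
\]
so that $\nu$ is smooth on $V$ and $\partial\rho(z)\,\nu(z)=1$. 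For a small constant $c>0$ still to be fixed, I set
\[
w(z,v) := \tfrac{1}{2}\,Q_\rho(z)(v,v)\,\nu(z), \qquad g_{z,v}(\zeta) := z + c\zeta v - c^2\zeta^2\, w(z,v).
\]
Manifestly $g_{z,v}(0)=z$, $g'_{z,v}(0)=cv$, and the family is smooth in $(z,v)$. For $c$ small enough $g_{z,v}:\overline\Delta\to\C^n$ is a holomorphic embedding, as witnessed by the factorisation
\[
g_{z,v}(\zeta_1)-g_{z,v}(\zeta_2) = c(\zeta_1-\zeta_2)\bigl[v-c(\zeta_1+\zeta_2)w(z,v)\bigr],
\]
whose bracket is uniformly close to the unit vector $v$ over $\overline V$ and $\overline\Delta$.

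The central computation is the Taylor expansion
\[
\rho(z+u) = \rho(z) + 2\Re\bigl[\partial\rho(z)\,u\bigr] + \Re\,Q_\rho(z)(u,u) + \Lcal_\rho(z,u) + O(|u|^3)
\]
applied to $u=c\zeta v - c^2\zeta^2\,w(z,v)$. The hypothesis $\partial\rho(z)v=0$ kills the linear $O(c)$ term, and the definition of $w$ makes the two $\Re(\zeta^2 Q_\rho(z)(v,v))$ contributions (coming from $2\Re\partial\rho(z)u$ and from $\Re\,Q_\rho(u,u)$) cancel exactly. What is left is
\[
\rho\bigl(g_{z,v}(\zeta)\bigr) = \rho(z) + c^2|\zeta|^2\,\Lcal_\rho(z,v) + R_{z,v}(\zeta),
\]
with a smooth remainder satisfying $|R_{z,v}(\zeta)|+|dR_{z,v}(\zeta)|=O(c^3|\zeta|^2)$ uniformly in $z\in\overline V$, $|v|=1$ and $\zeta\in\overline\Delta$. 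Strong pseudoconvexity and compactness give $\Lcal_\rho(z,v)\geq m>0$ for all such $(z,v)$. Fix $c>0$ small enough, then shrink to a tubular neighbourhood $U\subset V$ of $b\Omega$ inside which $|\rho(z)|<\tfrac12 c^2 m$ on $U\cap\Omega$; the principal part of $\rho\circ g_{z,v}$ then vanishes on an exact circle of radius $\leq 1/\sqrt{2}$ inside $\Delta$.

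The remaining conclusions follow from this identity. Along any zero of $F_{z,v}:=\rho\circ g_{z,v}$ one has $|\zeta|$ of order $\sqrt{|\rho(z)|}/c$, so the principal part $c^2\Lcal_\rho(z,v)\,\zeta$ of $\partial F_{z,v}/\partial\bar\zeta$ dominates the $O(c^3|\zeta|^2)$ error from $R_{z,v}$ by a factor blowing up like $\Lcal_\rho(z,v)^{3/2}/\sqrt{|\rho(z)|}$; in particular $dF_{z,v}\ne 0$ on $\{F_{z,v}=0\}$. The implicit function theorem then produces a smooth compact embedded $1$-submanifold in $\Delta$, and a straightforward homotopy from the rotationally symmetric principal part to the full $F_{z,v}$ (in which $dF_{z,v}$ remains non-zero along the evolving level set, by the same estimate) identifies this submanifold with a single Jordan curve. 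Transversality of $g_{z,v}(\Delta)$ with $b\Omega$ in $\C^n$ is equivalent to $\partial\rho(q)\,g'_{z,v}(\zeta_0)\ne 0$ at intersection points $q=g_{z,v}(\zeta_0)$, which is just $dF_{z,v}\ne 0$ together with the embeddedness of $g_{z,v}$, and has already been established. The chief obstacle is the $\Re(\zeta^2 Q_\rho(v,v))$ term in the Taylor expansion: without the quadratic correction $-c^2\zeta^2 w(z,v)$ this term can compete with or exceed the Levi term and turn the zero set into a hyperbolic arc unbounded in the $\zeta$-plane, so strong pseudoconvexity alone would not suffice for the plain affine disc.
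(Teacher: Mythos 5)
Your construction is correct and follows the same underlying idea as the paper: expand $\rho$ to second order, use $\partial\rho(z)v=0$ to kill the linear term, cancel the holomorphic Hessian term $\Re\,Q_\rho(z)(v,v)\zeta^2$, and let strong pseudoconvexity force $\rho\circ g_{z,v}$ to grow like $c^2\Lcal_\rho(z,v)|\zeta|^2$, so its zero set is a Jordan curve met transversely. The only difference is one of implementation -- the paper realises the discs as graphs over linear discs inside the quadric hypersurface $\Sigma_z$ on which the linear and holomorphic quadratic terms vanish exactly, whereas you use an explicit degree-two polynomial disc with a normal correction $-c^2\zeta^2 w(z,v)$ that achieves the same cancellation up to $O(c^3)$, and you supply the quantitative gradient estimates on the zero level set rather more explicitly than the paper's sketch.
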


\begin{proof}
This construction can be found in several papers cited in the introductory section (see e.g.\ \cite{ForstnericGlobevnik1992CMH} or \cite[Sect.\ 3]{DrinovecForstneric2012IUMJ}). 
Nevertheless, we given an outline of proof for completeness.
Fix a point $z\in \C^n$ in the domain of $\rho$. The Taylor expansion of $\rho$ around this point equals 
\begin{equation} \label{eq:Taylor}
	\rho(z+w) = \rho(z) +\Re\biggl( 2\sum_{j=1}^n \frac{\di\rho(z)}{\di z_j} w_j  +  
	 \sum_{j,k=1}^n \frac{\partial^2 \rho(z)}{\partial z_j \partial z_k} w_j w_k   \biggr)
	+ \Lcal_\rho(z,w) + o(|w|^2), 
\end{equation}
where $\Lcal_\rho$ is the Levi form \eqref{eq:Leviform}. 
If $z$ is close enough to $b\Omega=\{\rho=0\}$ then the differential $\di\rho(z)$ is 
nonvanishing. For such $z$, the set
\begin{equation}\label{eq:Sigma-z}
	\Sigma_{z}= \Bigl\{ w=(w_1,\ldots,w_n) \in\C^n:
	2\sum_{j=1}^n \frac{\di\rho(z)}{\di z_j} w_j  +  
	 \sum_{j,k=1}^n \frac{\partial^2 \rho(z)}{\partial z_j \partial z_k} w_j w_k =0 \Bigr\}
\end{equation}
is a quadratic complex hypersurface which is smooth (nonsingular) near the origin 
$0\in \C^n$, and it follows from \eqref{eq:Taylor} that near $w=0$ we have that 
\[ 
	\rho(z+w) = \rho(z) +  \Lcal_\rho(z,w) + o(|w|^2) \quad \text{for} \ w\in \Sigma_z.
\] 
Since $\Lcal_\rho(z,w)\ge C |w|^2$ for some constant $C>0$ which can be chosen 
independent of the point $z$ in a neighbourhood of $b\Omega$, we see that $\rho$ 
restricted to the affine complex hypersurface $W_z=z+\Sigma_z$ (passing through $z$)
increases quadratically in a neighbourhood of $z$. Hence, if a point $z\in\Omega$ is 
close enough to $b\Omega$ then the connected component $W^0_z$ of 
$W_z\cap \Omega$ containing $z$ is a topological ball whose smooth 
boundary $bW^0_z$ is a connected component of $W_z\cap b\Omega$ and the 
intersection is transverse along $bW^0_z$. Clearly, $W_z$ is locally near $z$ a graph 
over the affine tangent plane $T_z W_z=z+\ker\di\rho(z)$. This 
gives holomorphic discs $g_{z,v}$ with images in $W_z$ (graphs over affine linear discs in 
the hyperplane $z+\ker\di\rho(z)$ in directions $v\in \ker\di\rho(z)$) satisfying the 
conclusion of the lemma. 

Note that the hypersurfaces $\Sigma_z$ in \eqref{eq:Sigma-z} depend on the first and the second order 
partial derivatives of the defining function $\rho$. Hence, if $\rho$ is of class $\Cscr^{k+1}$ then the holomorphic discs
$g_{z,v}$ obtained in this way are of class $\Cscr^{k-1}$ in the parameters $(z,v)$.
\end{proof}

%
%
\begin{proof}[Proof of Theorem \ref{th:main1}]
Let $f_0:\overline M\to \Omega$ be a continuous map which is holomorphic on $M$, 
and let $K$ be a compact subset of $M$. By \cite[Theorem 1.1]{DrinovecForstneric2007DMJ} 
we can approximate $f_0$ uniformly on $K$ and interpolate it to any given order at finitely 
many given points in $M$ by a holomorphic map $f_1:\overline M\to\Omega$ satisfying 
$f_1(bM)\subset U\cap \Omega$, where $U$ is a neighbourhood of $b\Omega$
for which Lemma \ref{lem:discs} holds and $\di \rho(z)\ne 0$ for each $z\in U$.
By a general position argument we may further assume that $f_1$ is an immersion if $n=2$ 
and an embedding (injective immersion) if $n\ge 3$ provided that this is consistent with the interpolation conditions.

Consider the complex vector bundle $E=\ker \di \rho\to U$,
a complex hyperplane subbundle of class $\Cscr^k$ of the holomorphic tangent bundle $TU$. 
Note that the pull-back bundle $(f_1)^* E\to bM$ by the map $f_1:bM\to U\cap \Omega$
is trivial (since every orientable vector bundle over a circle is trivial and $bM$ is a 
union of finitely many circles). This gives a $\Cscr^k$ map $v:bM\to\C^n$ 
such that for every $x\in bM$ we have  that 
\[
	\text{$|v(x)|=1$\ \ and\ \ $v(x) \in E_{f_1(x)}=\ker \di\rho(f_1(x))$.}
\]

Let $g_{z,v}: \Delta\to\C^n$ be the family of holomorphic discs given by Lemma \ref{lem:discs}. 
With the map $v$ as above, we define the map $h:bM\times \Delta\to\C^n$ by 
\begin{equation}\label{eq:h}
	h(x,\zeta) = g_{f_1(x),v(x)}(\zeta)\ \ \text{for}\ x\in bM\ \text{and}\ \zeta\in \Delta.
\end{equation}
Note that $h$ is smooth of class $\Cscr^{k-1}$ in both variables, and for each $x\in bM$ 
we have that $h(x,0)=f_1(x)$ and the map $h(x,\cdotp):\Delta\to\C^n$ is holomorphic. 
By the construction, each holomorphic disc $g_{z,v}:\Delta\to \C^n$ intersects $b\Omega$
transversely in the image $g_{z,v}(\gamma_{z,v})$ of a closed Jordan curve 
$\gamma_{z,v}\subset \Delta$ enveloping the origin $0\in\Delta$. It follows that 
$h(x,\cdotp):\Delta\to\C^n$ has the same property for every $x\in bM$. 

Consider the Taylor expansion of $h$ in the second variable:
\[ 
	h(x,\zeta) = f_1(x) + \sum_{j=1}^\infty h_j(x) \zeta^j \quad 
	\text{for}\ \ x\in bM\ \text{and}\ \zeta\in\Delta. 
\] 
The $\C^n$-valued functions $h_j$ are of class $\Cscr^{k-1}(bM)$. 
By the Mergelyan theorem we can approximate them in $\Cscr^{k-1}(bM)$ 
by holomorphic functions $\tilde h_j$ on an open collar $V\subset R$ around $bM$ 
in the ambient compact Riemann surface $R$. 

By a theorem of Royden \cite[Theorem 10]{Royden1967JAM},
every holomorphic function on an open neighbourhood of a compact set $L$ in 
a compact Riemann surface $R$ 
can be approximated uniformly on $L$ by meromorphic functions on $R$ without poles on $L$. 
(For more precise theorems of this kind, see the surveys \cite{FornaessForstnericWold2020} and 
\cite[Sect.\ 1.12]{AlarconForstnericLopez2021}.) Applying Royden's result to the functions 
$\tilde h_j$ on a compact neighbourhood $L\subset V$ of $bM$ gives a neighbourhood 
$M'\subset R$ of $\overline M$ and a meromorphic map on $M'\times \C$ with values 
in $\C^n$ of the form
\begin{equation}\label{eq:H}
	H(x,\zeta) = f_1(x) + \sum_{j=1}^N  H_j(x) \zeta^j,\quad x\in M',\ \zeta\in\C
\end{equation}
for some large $N\in\N$ such that the coefficients $H_j$ have no poles on $L\cap M'$ 
and $H$ approximates $h$ \eqref{eq:h} as closely as desired in $\Cscr^{k-1}(bM\times r\Delta)$ 
for any given $r\in (0,1)$. If the approximation of $h$ by $H$ is close enough, 
there is for each $x\in bM$ a closed smooth Jordan curve $\gamma_x\subset r\Delta$ 
bounding a simply connected domain $0\in D_x\subset r\Delta$ such that 
\begin{equation}\label{eq:propH}
	H(x,D_x) \subset \Omega\ \ \text{and}\ \ H(x,\gamma_x) \subset b\Omega 
	\ \ \ \text{for all}\ \ x\in bM.
\end{equation}
By the construction, the family of Jordan curves $\{\gamma_x\}_{x\in bM}$ in $\Delta$ depends
smoothly of class $\Cscr^{k+1}$ (the smoothness class of $\rho$) on the point $x\in bM$.
More precisely, the torus $\{(x,\gamma_x):x\in bM\}$ in $bM\times\Delta$ fibred by these 
curves is of class $\Cscr^{k+1}$.

We now apply the main result of \v Cerne's paper \cite{Cerne2004AJM}, according to which 
there is a function $\zeta:\overline M\to\Delta$ of class $\Cscr^{k,\alpha}$ for any $\alpha\in(0,1)$  
which is holomorphic in $M$, it solves the Riemann--Hilbert boundary value problem 
\begin{equation}\label{eq:zeta}
	\zeta(x)\in \gamma_x\ \ \text{for all}\ x\in bM,
\end{equation}
and it vanishes to a given order at any given finite set of points in $M$. More precisely, 
given an effective divisor $D$ on $M$ with finite support, we can choose $\zeta$ as above 
such that $(\zeta)\ge D$. In particular, we may choose $\zeta$ to vanish at all poles in $M$ 
of the coefficients $H_j$ of the map $H$ in \eqref{eq:H} to the same or higher order 
so that the product $\zeta(x)H_j(x)$ is holomorphic on $M$ for every $j\in\{1,\ldots,N\}$. 
Furthermore, by choosing $\zeta$ to have sufficiently many zeros, the normal families 
argument shows that $\zeta$ will be arbitrarily uniformly close to zero on any given 
compact set in $M$. For some such $\zeta$, the map $f:\overline M\to\C^n$ defined by
\begin{equation}\label{eq:f}
	f(x) = H(x,\zeta(x))\ \ \text{for}\ x\in\overline M
\end{equation}
satisfies the conclusion of the theorem. Indeed, $f\in\Cscr^{k,\alpha}(\overline M)$, 
it is holomorphic in $M$, and for every $x\in bM$ we have that $f(x)\in b\Omega$ by
\eqref{eq:propH} and \eqref{eq:zeta}. Furthermore, if $\zeta$ is chosen to vanish to order $d$ at 
the points $a_1,\ldots,a_l\in M$ which are not the poles of any $H_j$ in \eqref{eq:H}, 
then at these points $f$ 
agrees with $f_1$ to order $d$. At the poles of $H_j$ we can ensure the same condition
by choosing $\zeta$ to vanish to a suitably high order. Next, if $\zeta$ is close enough 
to $0$ on a given compact set $K\subset M$ then $f$ is $\epsilon$-close to $f_1$ on $K$ 
by the construction. Finally, we can ensure that $f(\overline M)$ lies in the domain of the 
strongly plurisubharmonic defining function $\rho$ for $\Omega$, and hence the maximum 
principle applied to the subharmonic function $\rho\circ f$, along with the fact this 
function vanishes on $bM$, implies $f(M)\subset \Omega$.

It was already mentioned in the introduction that a proper holomorphic map 
$f:M\to \Omega$ which is of class $\Cscr^1(\overline M)$ is an immersion along $bM$. 
It remains to explain how to ensure that $f$ is an immersion on $M$ if $n=3$, 
and an embedding if $n\ge 4$. 

Assume that $n\ge 3$. Since $\dim_\R (bM\times \Delta)=3$ and $f_1:\overline M\to \Omega$ 
is an immersion, we can slightly perturb the map $h$ in \eqref{eq:h}, keeping it fixed on 
$bM\times \{0\}$, so as to make it an immersion. This property is inherited by the map 
$H$ in \eqref{eq:H} on the set $V\times r\Delta$ provided that the approximation of $h$ 
by $H$ is close enough and the neighbourhood $V$ of $bM$ is chosen small enough. 
From the definition \eqref{eq:f} it follows that $f$ is then an immersion on $\overline M\cap V$ 
for every choice of the function $\zeta$ in \eqref{eq:zeta}. 
By choosing $\zeta$ to be close enough to zero on the compact set 
$K=M\setminus V\subset M$, we ensure that $f$ is so close to the immersion $f_1$ on $K$ 
that it is itself an immersion there. Hence, for such $\zeta$ the map $f$ is an immersion.

If $n\ge 4$ then we can perturb the map $h$ in \eqref{eq:h} to ensure that it is an embedding. 
As before, this property is inherited by the map $H$ in \eqref{eq:H} on the set 
$V\times r\Delta$ provided that the approximation is close enough and the neighbourhood 
$V$ of $bM$ is chosen small enough. We conclude the proof as in the previous case, 
showing that $f$ is an embedding provided that the function $\zeta$ is close enough to 
$0$ on the compact set $K=M\setminus V\subset M$.
\end{proof}
 
%
%
\begin{remark}\label{rem:weaker}
It is evident from our proof of Theorem \ref{th:main1} that the same result holds 
under the following weaker condition on $\Omega$:
\begin{enumerate}[\rm (a)]
\item there is a smooth defining function $\rho$ for $\Omega$ whose Levi form 
$\Lcal_\rho(z,\cdotp)$ has at least two positive eigenvalues at every point 
$z\in\overline \Omega$, and 
\item for every closed path $\gamma:[0,1]\to b\Omega$ there is a path of nonzero 
complex tangent vectors $0\ne v(t)\in \ker\di\rho(\gamma(t)))=T^\C_{\gamma(t)}b\Omega$ 
such that $\Lcal_\rho(\gamma(t),v(t))>0$ for all $t\in[0,1]$.
\end{enumerate}
If $\Omega$ is strongly pseudoconvex then condition (b) holds since the complex 
vector bundle $\ker\di\rho \to b\Omega$ is orientable, and hence trivial over 
any loop in $b\Omega$. This is an open condition, so it also holds for loops
in $\Omega$ close enough to $b\Omega$; note that this was used in the proof.
\end{remark}


\subsection*{Acknowledgements}
Research was supported by the European Union 
(ERC Advanced grant HPDR, 101053085) 
and grants P1-0291, J1-3005, and N1-0237 from ARRS, Republic of Slovenia. 
I wish to thank Antonio Alarc\'on, Miran \v Cerne, and Barbara Drinovec Drnov\v sek 
for helpful discussions, and Edgar Lee Stout for having provided some relevant references.





\vspace*{0.5cm}
\noindent Franc Forstneri\v c

\noindent Faculty of Mathematics and Physics, University of Ljubljana, Jadranska 19, SI--1000 Ljubljana, Slovenia

\noindent 
Institute of Mathematics, Physics and Mechanics, Jadranska 19, SI--1000 Ljubljana, Slovenia.

\noindent e-mail: {\tt franc.forstneric@fmf.uni-lj.si}

\end{document}